\def\frak{\mathfrak}
\def\Bbb{\mathbb}
\def\Cal{\mathcal}
\newtheorem*{prop*}{Proposition}
\newtheorem*{thm*}{Theorem}
\newtheorem*{lem*}{Lemma}
\newtheorem*{cor*}{Corollary}
\newcommand{\id}{\operatorname{id}}
\renewcommand{\o}{\circ}
\let\ccdot\cdot
\def\cdot{\hbox to 2.5pt{\hss$\ccdot$\hss}}
\newcommand{\lpl}
{\mbox{$
\begin{picture}(12.7,8)(-.5,-1)
\put(2,0.2){$+$}
\put(6.2,2.8){\oval(8,8)[l]}
\end{picture}$}}
\newcommand{\al}{\alpha}
\newcommand{\be}{\beta}
\newcommand{\la}{\lambda}
\renewcommand{\phi}{\varphi}
\newcommand{\ph}{\varphi}
\newcommand{\si}{\sigma}
\newcommand{\La}{\Lambda}
\newcommand{\Ga}{\Gamma}
\newcommand{\Ph}{\Phi}
\newcommand{\Ps}{\Psi}
\newcommand{\Om}{\Omega}
\newcommand{\fg}{{\frak g}}
\newcommand{\Rho}{{\mbox{\sf P}}}
\newcommand{\ce}{{\Cal E}}
\def\sideremark#1{\ifvmode\leavevmode\fi\vadjust{\vbox to0pt{\vss
 \hbox to 0pt{\hskip\hsize\hskip1em
 \vbox{\hsize3cm\tiny\raggedright\pretolerance10000
 \noindent #1\hfill}\hss}\vbox to8pt{\vfil}\vss}}}%
\begin{document}
\title[Invariant operators via curved Casimirs:examples]{Conformally Invariant Operators\\ 
via Curved Casimirs: Examples}
\author{Andreas \v Cap, A.\ Rod Gover, and V. Sou\v cek}
\address{A.C.: Fakult\"at f\"ur Mathematik, Universit\"at Wien,
  Nordbergstr. 15, 1090 Wien, Austria\newline
\indent A.R.G.: Department of
  Mathematics, The University of Auckland, Auckland, New Zealand
\newline\indent V.S.: Mathematical Institute, Charles University,
  Sokolovsk\'a 83, Praha, Czech Republic}
\email{Andreas.Cap@esi.ac.at, r.gover@math.auckland.ac.nz,\newline\indent
  soucek@karlin.mff.cuni.cz}  
\dedicatory{Dedicated to Professor J.J.~Kohn on the occasion of his
  75th birthday}
 \keywords{conformally invariant differential operators, curved Casimir operator, GJMS operators}
\begin{abstract}
  We discuss a scheme for a construction of linear conformally
  invariant differential operators from curved Casimir operators; we
  then explicitly carry this out for several examples. Apart from
  demonstrating the efficacy of the approach via curved Casimirs, this
  shows that this method is general in that it applies both in regular
  and in singular infinitesimal character, and also that it can be
  used to construct standard as well as non--standard operators.
  (Nevertheless the scheme discussed here does not recover all
  operators.)  The examples treated include conformally invariant
  operators with leading term, in one case, a square of the Laplacian,
  and in another case, a cube of the Laplacian.
\end{abstract}
 \maketitle

\section{Introduction}
Curved Casimir operators were originally introduced in
\cite{Cap-Soucek} in the setting of general parabolic geometries. For
any natural vector bundle associated to such a geometry, there is a
curved Casimir operator which acts on the space of smooth sections of
the bundle. The name of the operator is due to the fact that on the
homogeneous model of the geometry, it reduces to the canonical action
of the quadratic Casimir element. The curved Casimir operators may be
expressed by a simple (Laplacian like) formula in terms of the
fundamental derivative from \cite{tams} and hence share the very
strong naturality properties of the fundamental derivative.  While on
a general natural vector bundle the curved Casimir operator is of
order at most one, it always acts by a scalar on a bundle associated
to an irreducible representation. This scalar can be easily computed
from representation theory data. It was already shown in
\cite{Cap-Soucek} that using this and the naturality properties, one
can use the curved Casimir operators systematically to construct
higher order invariant differential operators. Namely,
\cite{Cap-Soucek} contains a general construction of splitting
operators, which are basic ingredients in all versions of the curved
translation principle.

Essentially the same construction can be also used to directly obtain
invariant differential operators acting between sections of bundles
associated to irreducible representations. One considers the tensor
product of a tractor bundle and an irreducible bundle. Such a bundle
has an invariant filtration such that the quotients of subsequent
filtrations components are completely reducible.  Adapting the action
of the centre of the structure group (which amounts to tensoring with
a density bundle), one may force a coincidence of curved Casimir
eigenvalues for irreducible components in different subquotients.  As
we shall see this leads to an invariant linear differential operator
acting between the sections of these components. A more difficult
issue is to prove, in some general context, that the resulting
operator is nontrivial. General tools for doing this systematically
are developed in \cite{Cap-Gover}.

The purpose of this article is to carry out the construction of
invariant operators explicitly for a few examples in the realm of
conformal structures. First, this shows that the general ideas can be
made explicit rather easily.  Secondly, it shows that the curved
Casimir operators can be used to produce both standard and
non--standard operators, and they work both in regular and in singular
infinitesimal character; this is in contrast to the usual
constructions of BGG sequences as developed in \cite{CSS,CD}.

Finally, we want to indicate how some of the
well known and intriguing phenomena concerning conformally invariant
powers of the Laplacian show up in the approach via curved Casimirs.
In particular, this concerns the fact that the critical powers of the
Laplacian are not strongly invariant and the non--existence of
supercritical powers of the Laplacian.

\subsection*{Acknowledgements} The basic ideas for this work evolved
during meetings of the first and second author in the ``Research in
Teams'' programme ``Conformal and CR geometry: Spectral and nonlocal
aspects'' of the Banff International Research Station (BIRS) and of
the first and third author at the Erwing Schr\"odinger Institute (ESI)
in Vienna. It was completed during a meeting of all three authors at
the New Zealand Institute of Mathematics and Its Applications (NZIMA)
thematic programme ``Conformal Geometry and its Applications'' hosted
by the Departement of Mathematics of the University of Auckland.

The authors were supported finacially as follows: AC by project P
19500--N13 of the ``Fonds zur F\"or\-der\-ung der wissenschaftlichen
Forschung'' (FWF); ARG by Marsden Grant no. 06-UOA-029; VS by the
institutional grant MSM 0021620839 and by the grant GA\v CR
201/08/0397. Finally we thank the referee for helpful comments.

\section{Examples of conformally invariant operators\\
constructed from curved Casimirs}\label{2}

\subsection{Conformal structures, tractor bundles, and tractor
  connections}\label{2.1} 

We shall use the conventions on conformal structures from
\cite{confamb}. We consider a smooth manifold $M$ of dimension $n\geq
3$ endowed with a conformal equivalence class $[g]$ of
pseudo--Riemannian metrics of some fixed signature $(p,q)$. We use
Penrose abstract index notation, so $\ce^a$ will denote the tangent
bundle $TM$ and $\ce_a$ the cotangent bundle $T^*M$. Several upper or
lower indices will indicate tensor products of these basic bundles,
round brackets will denote symmetrisation,   square brackets
alternation, and the subscript $0$ indicates a tracefree part.

For $w\in\Bbb R$ we denote by $\ce[w]$ the bundle of
$(-\tfrac{w}{n})$--densities on $M$. For any choice of metric $g$ in
the conformal class, sections of $\ce[w]$ can be identified with
smooth functions but changing from $g$ to $\hat g=f^2g$ (where $f$ is a
positive smooth function on $M$), this function changes by
multiplication by $f^w$. Adding $[w]$ to the notation for a bundle
indicates a tensor product by $\ce[w]$. Using these conventions, the
conformal structure can be considered as a smooth section
$\mathbf{g}_{ab}$ of the bundle $\ce_{(ab)}[2]$, called the
\textit{conformal metric}. Contraction with $\mathbf{g}_{ab}$ defines
an isomorphism $\ce^a\cong\ce_a[2]$, whose inverse can be viewed as a
smooth section $\mathbf{g}^{ab}$ of $\ce^{(ab)}[-2]$. We shall use
$\mathbf{g}_{ab}$ and $\mathbf{g}^{ab}$ to raise and lower tensor
indices.

The \textit{standard tractor bundle} of $(M,[g])$ will be denoted by
$\ce^A$. This is a vector bundle of rank $n+2$ canonically associated
to the conformal structure. It is endowed with a canonical bundle
metric $h_{AB}$ of signature $(p+1,q+1)$ which will be used to raise
and lower tractor indices. Further, there is a canonical linear
connection $\nabla^{\Cal T}$ on $\ce^A$ which is equivalent to the
conformal Cartan connection. Finally, there is a canonical inclusion
$\ce[-1]\hookrightarrow \ce^A$ whose image is an isotropic line
subbundle of $\ce^A$. This can be viewed as a canonical section $X^A$
of $\ce^A[1]$ which satisfies $h_{AB}X^AX^B=0$. Next, $X_A:=h_{AB}X^B$
can be interpreted as a projection $\ce^A\to\ce[1]$. These data fit
together to define a composition series for $\ce^A$ that we shall
denote $\ce[1]\lpl \ce_a[1]\lpl \ce[-1]$; the second $\lpl$ indicates
that $\ce[-1]$ is a subbundle of $\ce^A$ while the first $\lpl$ means
$\ce_a[1]$ is (isomorphic to) a subbundle of the quotient bundle 
$\ce^A/ \ce[-1] $ and that $(\ce^A/ \ce[-1])/\ce_a[1]\cong\ce[1]$. 
(The motivation for the notation is that summands include, while there 
is a projection onto direct summands). General tractor bundles then correspond to
$SO(p+1,q+1)$--invariant subspaces in tensor powers of $\Bbb
R^{(p+1,q+1)}$, and we will also use abstract index notation for
tractor indices.

Any choice of a metric $g$ in the conformal class gives rise to a
splitting $\ce^A\cong \ce[1]\oplus\ce_a[1]\oplus\ce[-1]$ of the
composition series. The change of this splitting caused by a conformal
rescaling of the metric can be easily described explicitly, see
\cite{luminy}, but we will not need these formulae here. What we will
need is the expression of the tractor connection in the splitting
associated to $g$ in terms of the Levi--Civita connection $\nabla$ of
$g$. To formulate this efficiently, we need the \textit{adjoint
  tractor bundle} of $(M,[g])$. By definition, this is the bundle
$\frak{so}(\ce^A)\cong\ce_{[AB]}$ of endomorphisms of $\ce^A$ which
are skew symmetric with respect to the tractor metric. By definition,
this bundle naturally acts on $\ce^A$ and hence (tensorially) on any
tractor bundle. 

Now the composition series of $\ce^A$ gives rise to a composition
series $\ce_{[AB]}=\ce^a \lpl(\ce_{[ab]}[2]\oplus\ce[0])\lpl\ce_a$, so
the adjoint tractor bundle contains $T^*M$ as a natural subbundle and
has $TM$ as a natural quotient. A choice of metric in the conformal
class also splits this composition series, so we obtain an isomorphism
$\ce_{[AB]}\cong\ce^a\oplus(\ce_{[ab]}[2]\oplus\ce[0])\oplus\ce_a$
depending on the choice of metric. In particular, we can view elements
of $T^*M$ naturally as elements of the adjoint tractor bundle and,
choosing a metric in the conformal class, we can also view elements of
$TM$ as elements in the adjoint tractor bundle.

There are explicit formulae how the identifications of tractor bundles
behave under a conformal change of metric, see e.g.~Theorem 1.3 of
\cite{luminy}. However, we will not need this formulae here, since we
will always deal with operations which are known to be invariant in
advance and use the splittings only to compute explicit formulae for
these operations. We shall only need the formula for the canonical
tractor connection in a splitting, which also can be found in Theorem
1.3 of \cite{luminy}. This formula is given in the proposition below.
Note that, comparing with \cite{luminy}, the difference in the sign of
the term involving the Rho tensor (also sometimes called the Schouten
tensor) is due to the fact that \cite{luminy} uses a different sign
convention for the Rho--tensor than \cite{confamb}.

\begin{prop*}
  Consider a tractor bundle $\Cal T\to M$ for a conformal structure
  $[g]$ on $M$, and let $\nabla^{\Cal T}$ be the canonical tractor
  connection on $\Cal T$. Choose a metric $g$ in the conformal class
  with Rho tensor $\Rho$ and let $\nabla$ be its Levi Civita
  connection, acting on $\Cal T$ via the isomorphism with a direct sum
  of weighted tensor bundles induced by the choice of metric. Further
  let us denote by $\bullet$ both the actions of $T^*M$ and of $TM$
  (the latter depending on the choice of metric) coming from the
  inclusion of the bundles into the adjoint tractor bundle. Then for
  any vector field $\xi\in\frak X(M)$ and any section $s\in\Ga(\Cal
  T)$ we have
$$
\nabla^{\Cal T}_\xi s=\nabla_\xi s+\xi\bullet s-\Rho(\xi)\bullet s. 
$$ 
\end{prop*}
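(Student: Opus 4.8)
The plan is to reduce the statement to the known formula for the tractor connection on the \emph{standard} tractor bundle $\ce^A$ and then extend to arbitrary tractor bundles by functoriality. First I would recall that a general tractor bundle $\Cal T$ is, by definition, an $SO(p+1,q+1)$-invariant summand of a tensor power of $\ce^A$, so its canonical tractor connection is the restriction of the tensor product connection induced by $\nabla^{\Cal T}$ on $\ce^A$. Likewise the action $\bullet$ of the adjoint tractor bundle $\ce_{[AB]}$ on $\Cal T$ is the tensorial extension of its action on $\ce^A$. Hence it suffices to verify the displayed formula for $\Cal T=\ce^A$, since both sides of the formula are derivations with respect to tensor products (for the $\nabla$ and $\nabla^{\Cal T}$ terms this is the Leibniz rule; for the $\xi\bullet$ and $\Rho(\xi)\bullet$ terms it is the tensorial nature of the adjoint action), and both sides are tensorial in sections over densities of weight $0$.

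Next I would fix a metric $g$ in the conformal class, use the induced splitting $\ce^A\cong\ce[1]\oplus\ce_a[1]\oplus\ce[-1]$, and write a section $s$ of $\ce^A$ as a triple $(\sigma,\mu_a,\rho)$. The standard formula for the tractor connection in a scale (which is exactly Theorem 1.3 of \cite{luminy}, up to the sign convention on $\Rho$ noted in the text) reads
$$
\nabla^{\Cal T}_\xi\begin{pmatrix}\sigma\\ \mu_a\\ \rho\end{pmatrix}
=\begin{pmatrix}\nabla_\xi\sigma-\xi^a\mu_a\\ \nabla_\xi\mu_a+\xi_a\rho+\Rho_{ab}\xi^b\sigma\\ \nabla_\xi\rho-\Rho_{ab}\xi^b\mu^a\end{pmatrix}.
$$
I would then compute, in the same splitting, the action of $\xi\bullet$ and of $\Rho(\xi)\bullet$ on $s$. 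Here $\xi\in TM$ and $\Rho(\xi)\in T^*M$ are viewed as sections of $\ce_{[AB]}$ via the composition series $\ce_{[AB]}=\ce^a\lpl(\ce_{[ab]}[2]\oplus\ce[0])\lpl\ce_a$ and the splitting induced by $g$; the key point is to write down the matrices by which the slots $\ce^a$ and $\ce_a$ of $\ce_{[AB]}\cong\frak{so}(\ce^A)$ act on the column $(\sigma,\mu_a,\rho)$. Reading off the algebraic action: an element of the bottom slot $\ce_a$ (into which $T^*M$ includes, so this governs $\xi\bullet$ for $\xi\in T^*M$) acts by $(\sigma,\mu_a,\rho)\mapsto(0,\xi_a\sigma,\xi^a\mu_a)$-type formulas, and an element of the top slot $\ce^a$ (into which $TM$ includes after the choice of scale, governing $\xi\bullet$ for $\xi\in TM$) acts by the "transpose" pattern $(\sigma,\mu_a,\rho)\mapsto(-\xi^a\mu_a,\,\xi_a\rho,\,0)$ — with the precise signs forced by skew-symmetry with respect to $h_{AB}$, which pairs the $\ce[1]$ and $\ce[-1]$ slots and is the conformal metric on the middle slot. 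Substituting these into $\nabla_\xi s+\xi\bullet s-\Rho(\xi)\bullet s$ and matching entry by entry against the displayed tractor connection formula completes the verification for $\ce^A$.

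The main obstacle is purely bookkeeping: getting the signs and the placement of indices in the algebraic actions $\bullet$ exactly right, so that the $\xi\bullet s$ term reproduces the off-diagonal $-\xi^a\mu_a$ and $+\xi_a\rho$ contributions while the $-\Rho(\xi)\bullet s$ term reproduces the $+\Rho_{ab}\xi^b\sigma$ and $-\Rho_{ab}\xi^b\mu^a$ contributions — in particular tracking the sign flip relative to \cite{luminy} coming from the opposite $\Rho$-convention of \cite{confamb}, and checking that the grading/scaling weights of the three slots ($+1,\,$ middle$,\,-1$) are consistent with how $TM$ and $T^*M$ sit in $\ce_{[AB]}$. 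Once the standard bundle case is pinned down, the extension to general $\Cal T$ is immediate from the Leibniz rule and tensoriality, as indicated above, so no further difficulty arises there.
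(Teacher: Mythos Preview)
Your proposal is correct and amounts to an unpacking of what the paper does: the paper gives no proof at all for this proposition but simply refers to Theorem~1.3 of \cite{luminy} (with the remark on the sign of $\Rho$), so both you and the paper reduce to the same known formula for the standard tractor connection. Your explicit tensoriality argument for passing from $\ce^A$ to a general tractor bundle is just the routine step the paper leaves implicit, so there is no substantive difference in approach.
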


\subsection{A formula for the curved Casimir operator}\label{2.2}
The main tool used to efficiently treat examples is a new formula for
the curved Casimir operator acting on the tensor product of a tractor
bundle and an irreducible bundle. Consider the group $G:=SO(p+1,q+1)$
and let $P\subset G$ be the stabiliser of an oriented isotropic line
in the standard representation $\Bbb R^{(p+1,q+1)}$ of $G$. Then it is
well known that $P$ is the semidirect product of the (orientation
preserving) conformal group $CSO(p,q)$ and a normal vector subgroup
$P_+\cong\Bbb R^{n*}$. It is also well known that a conformal
structure of signature $(p,q)$ on a smooth manifold $M$ determines a
canonical Cartan geometry of type $(G,P)$, so in particular there is a
canonical principal bundle on $M$ with structure group $P$. Forming
associated bundles, any representation of the group $P$ gives rise to
a natural vector bundle on conformal manifolds.

The conformal group $CSO(p,q)$ is naturally a quotient of $P$, so any
representation of $CSO(p,q)$ gives rise to a representation of $P$.
The resulting representations turn out to be exactly those
representations of $P$ which are completely reducible, so they split
into direct sums of irreducibles. The corresponding bundles are called
\textit{completely reducible bundles} and they split into direct sums
of \textit{irreducible bundles}. The completely reducible bundles are
exactly the usual tensor and density bundles. On the other hand, one
can look at restrictions to $P$ of representations of $G$, and these
give rise to tractor bundles. The standard tractor bundle $\ce^A$ and
the adjoint tractor bundle $\ce_{[AB]}$ from \ref{2.1} above correspond
to the standard representation $\Bbb R^{(p+1,q+1)}$ respectively the
adjoint representation $\frak{so}(p+1,q+1)$ of $G$ in this way.

Now recall first from Theorem 3.4 of \cite{Cap-Soucek} that the curved
Casimir operator on an irreducible bundle $W\to M$ acts by a real
multiple of the identity, and we denote the corresponding scalar by
$\be_W$. This scalar can be computed in terms of weights of the
representation which induces $W$. If the lowest weight of this
representation is $-\nu$, then $\be_W=\langle\nu,\nu+2\rho\rangle$,
where $\rho$ is half the sum of all positive roots. On a completely
reducible bundle, the action of the curved Casimir is tensorial and
can be obtained by decomposing the bundle into irreducible pieces, multiplying
each piece by the corresponding factor and then adding back up.

\begin{prop*}
  Let $(M,[g])$ be a conformal manifold of signature $(p,q)$ and let
  $\Cal T\to M$ be a bundle which can be written as the tensor product
  of a tractor bundle and an irreducible bundle. Choose a metric $g$
  in the conformal class and let $\nabla$ be its Levi--Civita
  connection, acting on $\Cal T$ via the identification with a
  completely reducible bundle induced by the choice of $g$. Further,
  let $\be:\Cal T\to \Cal T$ be the bundle map which, in this
  identification, acts on each irreducible component $W\subset\Cal T$
  by multiplication by $\be_W$. Let $\bullet$ denote the action of
  $T^*M$ on $\Cal T$ coming from the natural action on the tractor bundle.
  Then for a local orthonormal frame $\xi_\ell$ for $TM$ with dual
  frame $\ph^\ell$ for $T^*M$, the curved Casimir operator $\Cal C$
  acts on $s\in\Ga(\Cal T)$ by
  $$
  \Cal C(s)=\be(s)-2\textstyle\sum_{\ell}\ph^\ell\bullet
  (\nabla_{\xi_\ell}s-\Rho(\xi_\ell)\bullet s)
$$
\end{prop*}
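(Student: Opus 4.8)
The plan is to obtain the formula by specialising the general expression for the curved Casimir operator in terms of the fundamental derivative $D$ from \cite{Cap-Soucek}, rewriting everything in the trivialisation of $\Cal T$ determined by the chosen metric $g$; the Rho tensor will then enter exactly as it does in the tractor connection formula of \ref{2.1}.

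First I would put the general formula into a convenient shape. For a parabolic geometry the curved Casimir is assembled from the quadratic Casimir element of $\fg$ together with $D$; writing this element in a basis adapted to the $|1|$--grading $\fg=\fg_{-1}\oplus\fg_0\oplus\fg_1$ (so that $\fg_{\pm1}\cong\Bbb R^n$), with dual basis taken with respect to the invariant bilinear form on $\fg$, the summand lying in $U(\fg_0)$ contributes a tensorial operator, while the cross terms pairing $\fg_{-1}$ with $\fg_1$ produce the first order part. Using the Weyl structure determined by $g$ this takes the form
$$\Cal C(s)=\mathsf c(s)-2\textstyle\sum_\ell\ph^\ell\bullet D_{\xi_\ell}s,$$
where $\{\xi_\ell\}$ is a local $g$--orthonormal frame, $\{\ph^\ell\}$ its dual coframe, both regarded as sections of the adjoint tractor bundle $\ce_{[AB]}$ via the (metric--dependent) inclusions. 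To identify $\mathsf c$ I would apply this formula on a completely reducible bundle: there $T^*M=\fg_1$ acts trivially, so the entire first order term is zero and $\Cal C=\mathsf c$; but on an irreducible bundle $\Cal C$ is multiplication by $\be_W$ (Theorem~3.4 of \cite{Cap-Soucek}). Hence $\mathsf c$, being tensorial, acts on each irreducible piece of a completely reducible bundle by $\be_W$, and therefore on $\Cal T$, in the chosen identification, it coincides with the bundle map $\be$. A separate point to check is that the normalisation of the invariant form used in \cite{Cap-Soucek}, together with the use of a $g$--orthonormal frame, is precisely what makes the universal constant equal $2$.

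Next I would evaluate $D_{\xi_\ell}s$ in the splitting of $\Cal T$ determined by $g$. Writing $\Cal T$ as a tensor product of a tractor bundle and an irreducible bundle and using that $D$ is a derivation over tensor products, it is enough to treat the two factors. On the irreducible factor the fundamental derivative in a direction coming from $TM$ is simply the Levi--Civita derivative, because the algebraic corrections in the Weyl--structure formula for $D$ are actions of $\fg_1=T^*M$, and $T^*M$ acts trivially on an irreducible bundle. On the tractor factor one has the standard relation $D_{\xi_\ell}s=\nabla^{\Cal T}_{\xi_\ell}s-\xi_\ell\bullet s$ expressing the tractor connection through $D$, where $\xi_\ell\bullet$ is the algebraic action of $TM\subset\ce_{[AB]}$; by the Proposition in \ref{2.1} the right hand side equals $\nabla_{\xi_\ell}s-\Rho(\xi_\ell)\bullet s$, the two $TM$--action terms cancelling. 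Since the action of $T^*M$ on $\Cal T$ only sees the tractor factor, the Leibniz rule assembles these two computations into $D_{\xi_\ell}s=\nabla_{\xi_\ell}s-\Rho(\xi_\ell)\bullet s$ on all of $\Cal T$; substituting this into the formula from the previous step yields the claim.

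The work here is book--keeping rather than conceptual, and the part needing most care is the tracking of constants and signs: the normalisation of the invariant form on $\fg$ (hence the constant $2$), the signs in the passage between $D$, the tractor connection and the algebraic $\fg_1$--action, and, as already noted before the Proposition in \ref{2.1}, the sign with which the Rho tensor enters, which reflects the convention of \cite{confamb}. As a consistency check one can evaluate both sides on the homogeneous model, where $\nabla$ is flat and $\Cal C$ reduces to the canonical action of the quadratic Casimir element.
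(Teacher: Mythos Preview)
Your proposal is correct and follows essentially the same route as the paper: both start from the expression of $\Cal C$ in an adapted frame for the adjoint tractor bundle (Proposition~3.3 of \cite{Cap-Soucek}), identify the tensorial $\fg_0$--piece with $\be$ via Theorem~3.4 of \cite{Cap-Soucek}, and then reduce the problem to showing $D_{\xi_\ell}s=\nabla_{\xi_\ell}s-\Rho(\xi_\ell)\bullet s$. The only cosmetic difference is that for the tractor factor the paper cites the fundamental--derivative formula from \cite{luminy} directly, whereas you pass through the relation $D_{\xi_\ell}=\nabla^{\Cal T}_{\xi_\ell}-\xi_\ell\bullet$ and then invoke the Proposition in \ref{2.1} to cancel the $TM$--action; these are equivalent.
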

\begin{proof}
  We use the formula for $\Cal C$ in terms of an adapted local frame
  for the adjoint tractor bundle from Proposition 3.3 of
  \cite{Cap-Soucek}. Having chosen the metric $g$, the adjoint tractor
  bundle splits as $TM\oplus \frak{so}(TM)\oplus T^*M$, and for any
  local frame $\{A_r\}$ for $\frak{so}(TM)$, the local frame
  $\{\xi_\ell,A_r,\ph^\ell\}$ for the adjoint tractor bundle is
  evidently adapted. According to Proposition 3.3 of
  \cite{Cap-Soucek}, one may write $\Cal C(s)$ as the sum of
  $-2\sum_\ell\ph^\ell\bullet D_{\xi_\ell}s$ (with $D$ denoting the
  fundamental derivative) and a tensorial term, in
  which only actions of elements of $\frak{so}(TM)$ show up. Hence the
  latter term preserves any irreducible summand of $\Cal T$, and the
  proof of Theorem 3.4 of \cite{Cap-Soucek} shows that, on such a
  summand $W$, $\Cal C(s)$ acts by multiplication by $\be_W$. To complete the
  proof, it thus suffices to show that
$$
D_{\xi_\ell}s=\nabla_{\xi_\ell}s-\Rho(\xi_\ell)\bullet s.
$$
If $\Cal T$ is a tractor bundle, then this follows immediately from
the formula for the fundamental derivative in section 1.7 of
\cite{luminy}. The formula there (applied to standard tractors) shows
that $D_{\xi_\ell}$ equals $\nabla_{\xi_\ell}$ on the tangent bundle
and on a non--trivial density bundle. By naturality, this is true for
arbitrary irreducible bundles, and the result follows. 
\end{proof}

This formula shows that to compute  explicitly the curved Casimir on
the tensor product of a tractor bundle with an irreducible bundle, only
two ingredients are needed:  first we need to systematically
compute the numbers $\be_W$, and second  we need an
explicit formula for the action of $T^*M$ on the tractor bundle, since
this can be first used to compute $\Rho(\xi)\bullet s$ and then the
action of $\ph^\ell$. 

\subsection{The construction principle}\label{2.2a}
The construction principle we use is actually very close to the
construction of splitting operators in section 3.5 of
\cite{Cap-Soucek}. Let $\Cal T$ be the tensor product of a tractor
bundle and a tensor bundle. The natural filtration of the tractor
bundle (inherited from the filtration of the standard tractor bundle
from \ref{2.1}) induces a natural filtration of $\Cal T$, which we
write as $\Cal T=\Cal T^0\supset\Cal T^1\supset\dots\supset\Cal
T^N$. Each of the subquotients $\Cal T^i/\Cal T^{i+1}$ splits into a
direct sum of irreducible tensor bundles. On sections of each of these
bundles, the curved Casimir operator acts by a scalar by Theorem 3.4
of \cite{Cap-Soucek}, and this scalar is computable from the highest
(or lowest) weight of the inducing representation. We denote by
$\be_i^1,\dots,\be_i^{n_i}$ the different scalars that occur in this
way. 

Now define $L_i:=\prod_{\ell=1}^{n_i}(\Cal C-\be_i^\ell)$. This can be
viewed as a differential operator of order $\leq n_i$ acting on
sections of $\Cal T$. Moreover, naturality of the curved Casimir
operator implies that $L_i$ preserves each of the subspaces formed by
sections of one filtration component. Moreover, for each $j$, the
operator induced on sections of $\Cal T^j/\Cal T^{j+1}$ is given by
the same formula, but with $\Cal C$ being the curved Casimir operator
for that quotient bundle. In particular, this implies that $L_i$
induces the zero operator on $\Ga(\Cal T^i/\Cal T^{i+1})$ and hence
$L_i(\Ga(\Cal T^i))\subset\Ga(\Cal T^{i+1})$. 

Now fix indices $i<j$ and an irreducible component $W\subset \Cal
T^i/\Cal T^{i+1}$. Consider the composition $\pi_j\o L_j\o\dots\o
L_{i+1}$, where $\pi_j$ is the tensorial operator induced by the
projection $\Cal T^i\to\Cal T^i/\Cal T^{j+1}$. Evidently, this
composition defines a differential operator mapping sections of $\Cal
T^i$ to sections of $\Cal T^i/\Cal T^{j+1}$. However, by construction,
sections of $\Cal T^{i+1}$ are mapped to sections of $\Cal T^{i+2}$ by
$L_{i+1}$, which are mapped to sections of $\Cal T^{i+3}$ by
$L_{i+2}$, and so on. Hence our operator factors to sections of $\Cal
T^i/\Cal T^{i+1}$ and restricting to sections of $W$, we obtain an
operator $L:\Ga(W)\to\Ga(\Cal T^i/\Cal T^{j+1})$.

In section 3.5 of \cite{Cap-Soucek}, it is then assumed that the
Casimir eigenvalue $\be$ corresponding to the irreducible  bundle $W$
is different from all the $\be^k_\ell$ for $i<k\leq j$ and all
$\ell$. In that case, composing the projection $\Cal T^i/\Cal
T^j\to\Cal T^i/\Cal T^{i+1}$ with $L$, one obtains a non--zero
multiple of the identity, and hence $L$ is a splitting operator. 

But now let us assume that (with appropriate numeration)
$\be=\be_j^1$, and let $\tilde W\subset\Cal T^j/\Cal T^{j+1}$ be the
sum of the irreducible components corresponding to this eigenvalue.
Then we can write $L_j$ as $(\Cal C-\be)\o\tilde L_j$ where operator 
$\tilde L_j$ is a  polynomial in $\Cal C $.
Next, since all
polynomials in $\Cal C$ commute, we can also write the composition
$\pi_j\o L_j\o\dots\o L_{i+1}$ as $\pi_j\o\tilde L_j\o\dots\o L_{i+1}\o
(\Cal C-\be)$. But the latter composition evidently maps a section of
$\Cal T^i$, whose image in $\Cal T^i/\Cal T^{i+1}$ has values in $W$
to a section of $\Cal T^j$. Hence in this case, $L$ has values in
sections of $\Cal T^j/\Cal T^{j+1}$. Moreover, since 
$$
(\Cal C-\be)\o \pi_j\o L_j\o\dots\o L_{i+1}=\pi_j\o L_j\o\dots\o
L_{i+1}\o (\Cal C-\be)
$$
evidently induces the zero operator on $\Ga(W)$, we conclude that
$L$ actually has values in $\Ga(\tilde W)$, so we have obtained an
operator $L:\Ga(W)\to\Ga(\tilde W)$.

\subsection{Computing the Casimir eigenvalues}\label{2.3}
We need a systematic notation for weights and their relation to
irreducible bundles. Since these issues are slightly different in
even and odd dimensions, we will restrict our attention to the case of
even dimension $n=2m$ from now on; in many senses  conformally invariant powers
of the Laplacian are  more interesting in even dimensions. Note
that the weights involved are actually defined on the complexification
$\fg_{\Bbb C}=\frak{so}(2m+2,\Bbb C)$ of $\fg=\frak{so}(p+1,q+1)$. The
process of assigning weights to real representations of $\fg$ and
$\fg_0=\frak{co}(p,q)$ is discussed in section 3.4 of
\cite{Cap-Soucek}.

We use the notation from chapter 19 of \cite{Fulton-Harris} for
weights for $\fg_{\Bbb C}=\frak{so}(2m+2,\Bbb C)$. Hence weights will
be denoted by tuples $(a_1,a_2,\dots,a_{m+1})$, and the (highest
weights of) irreducible tensor representations (we will not require
any spin representations) correspond to tuples in which all the $a_i$
are integers and $a_1\geq a_2\geq\dots\geq a_{n-1}\geq \pm a_n$. For
example, for $i<m$, the $i$th exterior power $\La^i\Bbb C^{2m+2}$ is
irreducible and corresponds to the tuple $a_1=\dots=a_i=1$ and
$a_{i+1}=\dots=a_{m+1}=0$. In this notation, the half sum of all
positive roots is given by $\rho=(m,m-1,\dots,1,0)$.

Weights for the complexification of $\fg_0$ can be viewed as
functionals on the same space, the conditions on dominance and
integrality are different, however. Since this difference concerns
the first entry only, we use the notation $(a_1|a_2,\dots,a_{m+1})$
for these weights.

The formula for the Casimir eigenvalues is in terms of lowest weights.
For weights of tensor representations of $\fg_{\Bbb C}$ this coincides
with the highest weight since any such representation is isomorphic to
its dual. It will be helpful to keep in mind that the lowest weight of
a representation of $\fg_{\Bbb C}$ coincides with the lowest weight of
the irreducible quotient representation of $(\fg_0)_{\Bbb C}$. This is
sufficient to understand the correspondence between weights and
irreducible bundles. For example, the standard representation of
$\fg_{\Bbb C}$ corresponds to the weight $(1,0,\dots,0)$ and the
standard tractor bundle $\ce^A$, whose irreducible quotient is
$\ce[1]$. Hence $\ce[1]$ corresponds to the weight $(1|0,\dots,0)$ and
therefore $\ce[w]$ corresponds to $(w|0,\dots,0)$ for $w\in\Bbb R$.

More generally, for $i<m$, the $i$th exterior power of the standard
representation corresponds to $(1,\dots,1,0,\dots,0)$ (with $i$
entries equal to $1$) and is also a notation for $\La^i\ce^A$, which
clearly has $\La^{i-1}\ce_a\otimes\ce[i]$ as an irreducible quotient.
Hence $\ce_a$ and $\ce^a$ correspond to $(-1|1,0,\dots,0)$ and
$(1|1,0,\dots,0)$, respectively, and $\ce_{[ab]}[w]$ corresponds to
$(w-2|1,1,0\dots,0)$.  The highest weight of $S^k_0\ce_a$ is just $k$
times the highest weight of $\ce_a$, so $S^k_0\ce_a[w]$ corresponds to
$(w-k|k,0,\dots,0)$, and so on.

The final ingredient needed to apply the formula for Casimir
eigenvalues is the inner product on weights. Taking as our invariant
bilinear form half the trace form on the Lie algebra (which leads to
the nicest conventions), one simply obtains the standard inner
product. For example, for $W=S^k_0\ce_a[w]$ the corresponding weight
$\la=(w-k|k,0,\dots,0)$ and 
$$
\be_W=\langle \la,\la+2\rho\rangle=(w-k)(w+2m-k)+k(2m+k-2).
$$

\subsection{Standard tractors twisted by one--forms}\label{2.4}
We now have all the technical input at hand, so we look at the first
example. Consider the tensor product $\ce_a[w]\otimes\ce^A$ of the
standard tractor bundle with the bundle of weighted one--forms. We
will describe the curved Casimir operator on this bundle and find
basic splitting operators and all the invariant differential operators
between irreducible bundles that can be constructed from this curved
Casimir. From the composition series for $\ce^A$ from \ref{2.1} we get
a composition series $\ce_a[w+1]\lpl \ce_{ab}[w+1]\lpl\ce_a[w-1]$ for
our bundle. We use the convention that in the middle slot the first
indices come from $\ce_a[w]$ and the second ones from the tractor
bundle. The middle term decomposes as $\ce_{(ab)_0}[w+1]\oplus
\ce[w-1]\oplus \ce_{[ab]}[w+1]$, and if $n\geq 6$ then each of the
summands is irreducible. For $n=4$, the bundle $\ce_{[ab]}[w+1]$
splits into the sum of self--dual and anti--self--dual two forms,
which then are irreducible. As we shall see below, however, this does
not cause any change, so we can treat all even dimensions $\geq 4$
uniformly. According to these decompositions, sections
$\ce_a[w]\otimes\ce^A$ will be written as vectors of the form
$$
\begin{pmatrix}
  \si_a\\ A_{ab}\quad |\quad\al\quad |\quad B_{ab}\\\rho_a
\end{pmatrix}
$$
with $A_{ab}=A_{(ab)_0}$ and $B_{ab}=B_{[ab]}$. Following the usual
conventions the top slot is the projecting slot, so $\si_a$ has weight
$w+1$ while $\rho_a$ has weight $w-1$. 
The action of $\ph^i\in\Om^1(M)$
on the standard tractor bundle can be immediately computed from the
matrix representation of $\fg$, and using this, we obtain
$$
\ph_i\cdot
\begin{pmatrix}
  \si_a\\ A_{ab}\quad |\quad\al\quad |\quad B_{ab}\\\rho_a
\end{pmatrix}=
\begin{pmatrix}
  0\\ -\si_{(a}\ph_{b)_0}\quad |\quad -\si^i\ph_i\quad
  |\quad -\si_{[a}\ph_{b]}\\
  A_{ab}\ph^b+\tfrac{1}{n}\al\ph_a+B_{ab}\ph^b 
\end{pmatrix}.
$$
The Casimir eigenvalues $\be_W$ for the irreducible components in
our bundle can be computed using the formulae from \ref{2.3}. In
dimension four, the self--dual and anti--self--dual parts in
$\ce_{[ab]}[w+1]$ correspond to the weights $(w-1|1,1)$ and
$(w-1|1,-1)$, respectively. This shows that, for any choice of the
weight $w$, the curved Casimir operator acts by the same scalar on
sections of the two bundles. Hence in our constructions schemes for
operators we may always treat the sum of these two bundles as if it
were a single irreducible component, which shows that the general
discussion applies to dimension four as well. The numbers $\be_W$ are
given by
\begin{equation}
  \label{Cas-ew-1}
  \begin{pmatrix}
  a_0+n-1 \\ a_0-2w+n+1\quad |\quad a_0-2w-n+1\quad |\quad
  a_0-2w+n-3\\ a_0-4w-n+3
\end{pmatrix},
\end{equation}
where $a_0=w(w+n)$.  We will denote the eigenvalue in the top slot by
$\be_0$, the one in the bottom slot by $\be_2$, and the three middle
ones by $\be_1^1$, $\be_1^2$ and $\be_1^3$. Using this, we can now
write out the curved Casimir operator explicitly. Acting by $\nabla
-\Rho\bullet\ $ on a typical element, we get
$$
\begin{pmatrix}
\nabla_a\si_b \\ \nabla_aA_{bc}+\Rho_{a(b}\si_{c)_0}\quad |\quad
\nabla_a\al+\Rho_a{}^d\si_d\quad |\quad \nabla_aB_{bc}-\Rho_{a[b}\si_{c]}\\ 
\nabla_a\rho_b-\Rho_a{}^dA_{db}-\tfrac{1}{n}\al\Rho_{ab}+\Rho_a{}^dB_{db} 
\end{pmatrix}.
$$
Via Proposition \ref{2.2} we can compute $\Cal C$ by applying to this the
action of the index $a$, multiplying the result by $-2$, and adding
the components of the original element multiplied by the appropriate
scalar. This gives 
$$
\begin{pmatrix}
\be_0\si_a \\
\be_1^1A_{ab}+2\nabla_{(a}\si_{b)_0}\quad |\quad
\be_1^2\al+2\nabla^c\si_c \quad |\quad
\be_1^3B_{ab}+2\nabla_{[a}\si_{b]} \\
\be_2\rho_a-2\nabla^cA_{ca}-2\Rho^c{}_{(c}\si_{a)_0}-\tfrac{2}{n}\nabla_a\al-
\tfrac{2}{n}\Rho_a{}^c\si_c-2\nabla^cB_{ca}-2\Rho^c{}_{[c}\si_{a]} 
\end{pmatrix}.
$$
>From this formula, we can immediately read off a number of invariant
first order splitting operators as well as invariant first order
operators between irreducible bundles. For example, elements with
$\si_a=\al=B_{ab}=0$ form a natural subbundle of $\Cal
E_A\otimes\ce_a[w]$ for each $w$. On sections of this natural
subbundle, $\Cal C-\be_2\id$ defines a natural operator given by 
$$
\begin{pmatrix}
  0\\ A_{ab}\quad |\quad 0 \quad |\quad 0\\\rho_a
\end{pmatrix}\mapsto\begin{pmatrix} 0 \\
  (\be_1^1-\be_2)A_{ab} \quad |\quad 0 \quad |\quad 0\\
  -2\nabla^cA_{ca}.
\end{pmatrix}
$$
Since the value is independent of $\rho_a$, it descends to a natural
operator defined on $\ce_{(ab)_0}[w+1]$. If $\be_1^1-\be_2\neq 0$ or
equivalently $w\neq 1-n$, this is the splitting operator
$\Ga(\ce_{(ab)_0}[w+1])\to\Ga(\ce^A_a[w])$ as constructed in
\cite{Cap-Soucek}. However, for $w=1-n$, the operator has values in
the natural subbundle $\ce_a[-n]\subset \ce^A_a[1-n]$, so we obtain a
natural differential operator
$\Ga(\ce_{(ab)_0}[2-n])\to\Ga(\ce_a[-n])$ given by $A_{ab}\mapsto
-2\nabla^bA_{ba}$. This is the adjoint of the conformal Killing
operator. 

In the same way, one obtains splitting operators for the other middle
slots, and first order operators $\ce[0]\to\ce_a[0]$ (the exterior
derivative from functions to one--forms) and
$\ce_{[ab]}[4-n]\to\ce_a[2-n]$ (the divergence or equivalently the
exterior derivative from $(n-2)$--forms to $(n-1)$--forms). 

To construct invariant operators defined on the quotient bundle
$\ce_a[w+1]$, consider the differences of the $\be$'s from $\be_0$,
which are given by
$$
\begin{pmatrix}
  0\\ c_1^1\quad |\quad c_1^2\quad |\quad c_1^3\\ c_2 
\end{pmatrix}:=
\begin{pmatrix}
  0\\ 2w-2\quad |\quad 2w+2n-2\quad |\quad 2w+2\\ 4w+2n-4 
\end{pmatrix}
$$
>From the formula for $\Cal C$ from above, we can read off the three
first order invariant operators obtained in the case that $c_1^i=0$.
For $c_1^1=0$, i.e.~$w=1$ we get the conformal Killing operator
$\ce_a[2]=\ce^a\to \ce_{(ab)_0}[2]$. For $c_1^2=0$ we get $w=1-n$ and
we obtain the divergence $\ce_a[2-n]\to \ce[-n]$ (or equivalently the
exterior derivative from $(n-1)$--forms to $n$--forms. Finally,
$c_1^3=0$ corresponds to $w=-1$ as this gives the exterior derivative from one--forms to
two forms. 

To construct the full splitting operator defined on $\ce_a[w+1]$
respectively an operator from this bundle to $\ce_a[w-1]$ (for a
special value of $w$), we have to form $(\Cal C-\be_2)\o(\Cal
C-\be_1^1)\o(\Cal C-\be_1^2)\o(\Cal C-\be_1^3)$. This gives a
splitting operator provided that all $c_1^i$ and $c_2$ are nonzero by
Theorem 2 of \cite{Cap-Soucek}. For $c_2=0$, i.e.~$w=1-\tfrac{n}{2}$,
we see from \ref{2.2a} that we obtain an invariant differential
operator $\Ga(\ce_a[2-\tfrac{n}{2}])\to\Ga(\ce_a[-\tfrac{n}{2}])$ of
order at most two. We can immediately calculate  this operator using
the above formula for $\Cal C$. Its value on $\si_a$ reads as
$$
\begin{pmatrix}
  c_2c_1^1c_1^2c_1^3\si_a \\ 2 c_2c_1^2c_1^3\nabla_{(a}\si_{b)_0}
  \quad |\quad 2c_2c_1^1c_1^3\nabla^i\si_i\quad |\quad
  -2c_2c_1^1c_1^2\nabla_{[a}\si_{b]}\\ A_a(\si)
\end{pmatrix},
$$
where 
\begin{align*}
  A_a(\si)=& -2c_1^2c_1^3(2\nabla^i\nabla_{(i}\si_{a)_0}+c_1^1
  P^i{}_{(i}\si_{a)_0})-\tfrac2n
  c_1^1c_1^3(2\nabla_a\nabla^i\si_i+c_1^2P_a{}^i\si_i)\\
  +&2c_1^1c_1^2(2\nabla^i\nabla_{[i}\si_{a]}-c_1^3P^i{}_{[a}\si_{i]})
\end{align*}
In particular, we see that for $c_2=0$, only the bottom slot is
non--zero, and, as expected, we obtain an invariant operator
$\si\mapsto A_a(\si)$. We can easily compute the principal part of
this operator by looking only at the second order terms and commuting
derivatives. This shows that, up to a non--zero factor, the principal
part is given by 
$$
\si_a\mapsto (n-2)\big(n\Delta\si_a-4\nabla_a\nabla^i\si_i\big). 
$$
In particular, except for the case $n=2$, which is geometrically
irrelevant, we obtain a true second order operator.

\medskip

Collecting our results, we see that from curved Casimirs on the bundle
$\ce_a[w]\otimes\ce^A$ we obtain seven invariant operators between
irreducible bundles. Six of these are first order, while one is of
order two. The first order operators belong to two different BGG
sequences. The two exterior derivatives and the two divergences are
part of the de--Rham sequence, i.e.~the BGG sequence of the trivial
representation. The conformal Killing operator and its adjoint are
well known to be part of the BGG sequence corresponding to the adjoint
representation. Finally, for $n\geq 6$ the second order operator
$\Ga(\ce_a[2-\tfrac{n}{2}])\to\Ga(\ce_a[-\tfrac{n}{2}])$ is not part
of any BGG sequence, since the corresponding representations (or
rather the Verma modules associated to their duals) have singular
infinitesimal character. Moreover, the resulting operator is a
non--standard operator. Hence we see that even for this simple
example, we obtain both standard and non--standard operators both in
regular and singular infinitesimal character. In dimension four, the
situation is slightly different, since the two critical weights $w=-1$
and $w=1-\tfrac{n}2$ coincide. This means that the second order
operator is obtained as the composition of the divergence and the
exterior derivative. Hence for $n=4$, we obtain the Maxwell operator,
which is a standard operator in the BGG--sequence of the trivial
representation.

\section{Conformally invariant powers of the Laplacian}\label{3}
In this section, we show how to construct the conformally invariant
square and cube of the Laplacian from curved Casimir operators. There
are some well known subtle phenomena concerning these operators. As
shown in \cite{Graham:nonex} in dimension four and in
\cite{Gover-Hirachi:JAMS} in general, there are no conformally
invariant powers of the Laplacian in even dimensions $n=2m$ whose
order exceeds $n$. Moreover, the $m$th power (called the critical
power) is of much more subtle nature than the lower powers. As shown
in \cite{Eastwood-Slovak}, for all lower powers of the Laplacian (as
well as all operators occurring in BGG--sequences) there are formulae
which are strongly invariant (induced from homomorphisms on
semi--holonomic jet modules), while the critical powers do not have
this property. As we shall see, these phenomena are reflected very
nicely in the constructions via curved Casimir operators. For the
square of the Laplacian, a different construction has to be used in
the critical dimension four. On the other hand, the construction for
the cube of the Laplacian completely breaks down in dimension four.

\subsection{The square of the Laplacian in dimensions $\neq 4$}\label{3.1}
We consider the tracefree part in the symmetric square of the standard
tractor bundle twisted by a weight, i.e.~the bundle $\ce^{(AB)_0}[w]$.
>From the composition series of the standard tractor bundle in
\ref{2.1} we see that
$$
\ce^{(AB)_0}[w]=\ce[w+2]\lpl \ce_a[w+2] \lpl
(\ce_{(ab)_0}[w+2]\oplus \ce[w])\lpl \ce_a[w]\lpl \ce[w-2].
$$
We will again use a vector notation with the projecting slot on
top. To compute the action of $\frak p_+$, one has to represent
typical elements in each slot by tensor products of standard tractors,
and then compute the tensorial action. It is obvious how to get such
representatives, except for the two components in the middle. Using $\vee$ to denote the symmetric tensor product, 
the
representatives for $\ce[w]$ are the multiples of the element 
$$
\left(\begin{smallmatrix}1\\0\\0\end{smallmatrix}\right)\vee
\left(\begin{smallmatrix}0\\0\\1\end{smallmatrix}\right)-
\tfrac{1}{n}\textstyle\sum_j \left(\begin{smallmatrix}0 \\ e_j\\
    0\end{smallmatrix}\right)\vee \left(\begin{smallmatrix}0\\ e^j\\
    0\end{smallmatrix}\right) 
$$
for dual bases $\{e_j\}$ and $\{e^j\}$. On the other hand, typical
representatives for the elements in $\ce_{(ab)_0}[w+2]$ are given by
the sum of
$\left(\begin{smallmatrix}0\\\mu_a\\0\end{smallmatrix}\right)\vee
\left(\begin{smallmatrix}0\\\nu_b\\0\end{smallmatrix}\right)$ and an
appropriate multiple of the $\fg$--invariant expression representing
the tractor metric. Using these facts, one easily computes that the
$\frak p_+$--action as a map $\ce_a\otimes \Cal E^{(AB)_0}\to \Cal
E^{(AB)_0}$ is in vector notation given by 
$$
\ph_i\cdot
\begin{pmatrix}
  \si \\ \mu_a \\ A_{ab} \quad | \quad \al\\ \nu_a \\ \rho
\end{pmatrix}=
\begin{pmatrix}
  0\\ -2\si\ph_a \\ -\ph_{(a}\mu_{b)_0}\quad | \quad \ph^i\mu_i \\
  2\ph^iA_{ia}-\tfrac{n+2}{n}\al\ph_a\\ \ph^i\nu_i
\end{pmatrix}. 
$$
>From this, we can determine the formula for the curved Casimir
operator as in \ref{2.4} to obtain
$$\Cal C
\begin{pmatrix}
  \si\\ \mu_a \\ A_{ab}\quad |\quad \al \\ \nu_a \\ \rho 
\end{pmatrix}=
\begin{pmatrix}
\be_0\si \\ \be_1\mu_a+4\nabla_a\si \\
\be_2^1A_{ab}+2\nabla_{(a}\mu_{b)_0}+4\Rho_{(ab)_0}\si \quad |\quad 
\be_2^2\al-2\nabla^c\mu_c-4\Rho\si\\
\be_3\nu_a-4\nabla^cA_{ca}-4\Rho^c{}_{(c}\mu_{a)_0}+
2\tfrac{n+2}{n}\nabla_a\al-2\tfrac{n+2}{n}\Rho_a{}^c\mu_c\\
\be_4\rho-2\nabla_c\nu^c+4\Rho^{cd}A_{cd}-2\tfrac{n+2}{n}\Rho\al 
\end{pmatrix}
$$
Computing the Casimir eigenvalues corresponding to the irreducible
components which occur in that formula is straightforward and gives
$$
\begin{pmatrix}
  \be_0 \\ \be_1 \\ \be_2^1 \quad |\quad \be_2^2 \\ \be_3\\ \be_4
\end{pmatrix}=
\begin{pmatrix}
  w(w+n)+4w+2n+4 \\ w(w+n)+2w+2n \\ w(w+n)+2n \quad | \quad w(w+n)\\
w(w+n)-2w \\ w(w+n)-4w-2n+4
\end{pmatrix}.
$$
The differences of $\be_0$ from these numbers are given by
\begin{equation}
  \label{diff}
\begin{pmatrix}
  0 \\ 2w+4 \\ 4w+4 \quad | \quad 4w+2n+4 \\ 6w+2n+4 \\ 8w+4n
\end{pmatrix}  
\end{equation}
The critical weight for which we can expect an operator from the
top slot to the bottom slot is therefore given by $w=-m$ in dimension
$n=2m$. Inserting this into \eqref{diff}, we obtain
\begin{equation}
  \label{crit-diff}
\begin{pmatrix}
  0 \\ 4-n \\ 4-2n \quad | \quad 4 \\ 4-n \\ 0
\end{pmatrix}.  
\end{equation}
This already shows that something special will happen in dimension
four, since there we obtain a coincidence of four (rather than two) of
the Casimir eigenvalues. There would be another potential speciality
(a coincidence of three of the eigenvalues) in dimension $n=2$, but
this is not geometrically relevant.

According to \ref{2.2a}, an operator from the top slot to the bottom
slot is induced by $(\Cal C-\be_4)\o (\Cal C-\be_3)\o(\Cal
C-\be_2^1)\o(\Cal C-\be_2^2)\o (\Cal C-\be_1)$. To compute the
principal part of this induced operator, one can apply this
composition to an element for which only the top component is nonzero.
Moreover, observe that any derivative moves down one level, so terms
in lower levels which contain only few derivatives can be ignored.
Finally, one can freely commute derivatives when determining the
principal part. Using this simplifications and computing the
composition in the opposite order as written above, it is easy to
verify directly that up to a nonzero factor, the principal part equals
$(n-4)\Delta^2\si$. In particular, for $n\neq 4$ the principal part is
nonzero and we have constructed a conformally invariant square of the
Laplacian.

\subsection{The square of the Laplacian in dimension 4}\label{3.2}
In dimension four, the operator considered in \ref{3.1} reads as
$(\Cal C-\be_4)^3\o (\Cal C-\be_2^1)\o(\Cal C-\be_2^2)$ because of the
additional coincidences of eigenvalues. From \ref{3.1} we see that the
(fourth order) principal part of the induced operator $\ce\to\ce[-4]$
vanishes, and indeed we shall see from the further discussion, that
this operator is identically zero. Still we can obtain a conformally
invariant square of the Laplacian in dimension four from curved
Casimirs. Namely, we will show that actually the operator $(\Cal
C-\be_4)^2\o (\Cal C-\be_2^1)\o(\Cal C-\be_2^2)$ induces such a
square, but this needs some verifications.

Indeed, let us write the natural filtration of the bundle $\Cal
T=\ce^{(AB)_0}[w]$ as $\Cal T=\Cal T^0\supset\Cal
T^1\supset\dots\supset\Cal T^4\supset \{0\}$. Now by construction,
$(\Cal C-\be_2^1)\o(\Cal C-\be_2^2)$ maps sections of $\Cal T^2$ to
sections of $\Cal T^3$, and each occurrence of $\Cal C-\be_4$ maps
sections of $\Cal T$ to sections of $\Cal T^1$, sections of $\Cal T^1$
to sections of $\Cal T^2$, sections of $\Cal T^3$ to sections of $\Cal
T^4$, and sections of $\Cal T^4$ to zero. Thus the composition $(\Cal
C-\be_4)^2\o (\Cal C-\be_2^1)\o(\Cal C-\be_2^2)$ vanishes on $\Ga(\Cal
T^2)$, maps $\Ga(\Cal T^1)$ to $\Ga(\Cal T^4)$ and all of $\Ga(\Cal
T)$ to $\Ga(\Cal T^3)$. In particular, it induces operators
\begin{gather*}
  \Ga(\ce)=\Ga(\Cal T/\Cal T^1)\to\Ga(\Cal T^3/\Cal T^4)=\Ga(\ce_a[-2])\\
  \Ga(\ce_a)=\Ga(\Cal T^1/\Cal T^2)\to\Ga(\Cal T^4)=\Ga(\ce[-4]).
\end{gather*}
If we can prove that both these operators vanish, then we  get
an induced operator $\Ga(\ce)\to\Ga(\ce[-4])$ as required. Since this
is induced by a composition of four curved Casimirs, it follows
immediately that the symbol is induced by the four--fold action of
$\frak p_+$ and hence we have found an invariant square of the
Laplacian. 

It turns out that we can write the two operators whose vanishing we
want to prove as compositions. Since $\be_0=\be_1=\be_3=\be_4$, the
operator $\Cal C-\be_4$ induces invariant operators $\Ga(\Cal T/\Cal
T^1)\to\Ga(\Cal T^1/\Cal T^2)$ as well as $\Ga(\Cal T^3/\Cal
T^4)\to\Ga(\Cal T^4)$, and these are just the exterior derivative $d$
mapping functions to 1--forms, respectively the divergence $\delta$,
which is a formal adjoint to this.   On
the other hand, the composition $(\Cal C-\be_1)\o(\Cal
C-\be_2^1)\o(\Cal C-\be_2^2)$ induces an invariant operator
$T:\Ga(\Cal T^1/\Cal T^2)\to\Ga(\Cal T^3/\Cal T^4)$, so this maps
1--forms to 3--forms. The two operators we have to study are the
compositions $T\o d$ and $\delta\o T$, so we have to prove that these
vanish. We do this by showing that  $T$ is the Maxwell
operator (as expected).

Using the formula for $\Cal C$ from \ref{3.1}, a simple direct
computation shows that the operator $T$ maps $\mu_a$ to
$$
-4\nabla^c\nabla_{(c}\mu_{a)_0}+3\nabla_a\nabla^c\mu_c+
8\Rho^c{}_{(c}\mu_{a)_0}+6\Rho_a{}^c\mu_c.
$$
Now expanding the definition of the tracefree symmetric part
respectively of the Rho--tensor immediately leads to the identities
\begin{gather*}
  -4\nabla^c\nabla_{(c}\mu_{a)_0}=-2\nabla^c\nabla_c\mu_a-
  2\nabla^c\nabla_a\mu_c+\nabla_a\nabla^c\mu_c\\
  8\Rho^c{}_{(c}\mu_{a)_0}=4\Rho\mu_a+2\Rho_a{}^c\mu_c\\
  \nabla_a\nabla^c\mu_c=\nabla^c\nabla_a\mu_c-2\Rho_a{}^c\mu_c-\Rho\mu_a.
\end{gather*}
Putting this together, we immediately get
$T(\mu_a)=2\nabla^c\nabla_{[a}\mu_{c]}$ and this completes the
argument.

While we do not intend to discuss the concept of strong invariance in
detail in this paper, we want to make a brief comment on these issues.
The curved Casimir operators themselves are of course strongly
invariant in every sense, since they are of first order. Consequently,
any operator directly induced by a polynomial in curved Casimirs is
strongly invariant, too. In particular, the construction of \ref{3.1}
provides strongly invariant squares of the Laplacian in dimensions
different from $4$. The construction in dimension four however depends
on vanishing of the compositions $T\o d$ and $\delta\o T$, which (like
the equation $d\o d=0$) are not valid in a strong sense. Hence in
dimension 4 we cannot conclude that we get a strongly invariant
operator.

\subsection{The cube of the Laplacian}\label{3.3}
To conclude this article, we briefly outline what happens for the cube
of the Laplacian. The relevant bundle  to obtain a cube of the
Laplacian is of course $S^3_0\Cal E^A$, which has composition series 
\begin{multline*}
\textstyle\ce[w+3]\lpl \ce_a[w+3]\lpl
\binom{\ce_{(ab)_0}[w+3]}{\ce[w+1]}\lpl
\binom{\ce_{(abc)_0}[w+3]}{\ce_a[w+1]}\lpl\\
\textstyle \binom{\ce_{(ab)_0}[w+1]}{\ce[w-1]}
\lpl\ce_a[w-1]\lpl\ce[w-3]
  \end{multline*}
We use a vector notation similar as before. Computing the Casimir
eigenvalues is straightforward, and shows that the weight for which
one may expect an operator from the top slot to the bottom slot is
again $w=\frac{-n}{2}$. For this the differences of the Casimir
eigenvalue for the top slot from the other Casimir eigenvalues form
the pattern 
$$
\begin{pmatrix}
  0 \\ 6-n \\ 2(4-n) \quad |\quad 8\\
6-3n \quad |\quad 10-n\\ 2(4-n) \quad |\quad 8\\
6-n \\ 0
\end{pmatrix},
$$
which shows that additional coincidences of Casimir eigenvalues occur
in dimensions $4$, $6$, and $10$. While the special role of dimensions
$4$ (for which non--existence of a conformally invariant power of the
Laplacian is proved in \cite{Graham:nonex}) and $6$ (for which the
cube is the critical power of the Laplacian) has to be expected, the
special role of dimension $10$ comes as a surprise. 

To compute the curved Casimir, the main input is again the action of
$\frak g_1$ which, viewed as a map $\ce_a\otimes S^3_0\ce^A\to
S^3_0\ce^A$, is given by 
$$
\ph_i\cdot
\begin{pmatrix}
  \si \\ \mu^a \\ A_{ab} \quad |\quad \al \\ \Ph_{abc} \quad |\quad
  \nu^a\\ B_{ab} \quad |\quad \be\\ \tau^a\\ \rho
\end{pmatrix}=
\begin{pmatrix}
  0\\ -3\ph_a\\ -2\ph_{(a}\mu_{b)_0} \quad |\quad \ph^i\mu_i\\
-\ph_{(a}A_{bc)_0} \quad |\quad -2\tfrac{n+2}n\al\ph_a+2\ph^iA_{ia}\\ 
-\tfrac{n+4}{n+2}\ph_{(a}\nu_{b)_0}+3\ph^i\Ph_{iab} \quad |\quad
\ph^i\nu_i\\ -\tfrac{n+4}{n}\be\ph_a+2\ph^iB_{ia}\\\ph^i\tau_i
\end{pmatrix}.
$$
>From this, one easily derives the full formula for the curved
Casimir operator on the bundle $S^3_0\ce^A[w]$. According to
\ref{2.2a}, the operator to consider is
\begin{equation}
  \label{cubecomp}
(\Cal C-\be_0)\o(\Cal C-\be_1)^2\o(\Cal C-\be_2^1)^2\o(\Cal
C-\be_2^2)^2\o(\Cal C-\be_3^1)\o(\Cal C-\be_3^2),   
\end{equation}
where the squares are due to the fact that $\be_5=\be_1$ and
$\be_4^i=\be_2^i$ for $i=1,2$. To compute the principal part of the
induced operator, one proceeds in a manner similar to \ref{3.1} above.
That is by working through the composition starting with the factor
$\Cal C-\be_0$ and then working down level by level. One takes only
terms of high enough order in each level, and freely commutes
derivatives. This shows that, up to a nonzero factor, the principal
part is given by
$$
\si\mapsto (n-4)(n-6)(n-10)\Delta^3\si.
$$
We want to point out however, that while the factors $(n-4)$,
$(n-6)$, and $(n-10)$ occur as differences of Casimir eigenvalues, the
fact that they arise in the principal part is not at all
straightforward, but has to be verified by rather nasty computations.
In all dimensions except for these three critical ones, our operator
directly defines a conformally invariant cube of the Laplacian.

Concerning the critical dimensions, the situation is the following. The
easiest of these cases is dimension $10$. Here there is an additional
coincidence of Casimir eigenvalues, since $\be_3^2=\be_0$. Let us
 write $\Cal T=S^3_0\ce^A$ and  us denote the canonical
filtration of $\Cal T$ by $\Cal T=\Cal T^0\supset\dots\supset\Cal
T^6\supset\{0\}$. Now consider the composition
$$
(\Cal C-\be_3^2)\o(\Cal C-\be_2^1)\o(\Cal C-\be_2^2)\o(\Cal
C-\be_1).
$$
This maps $\Ga(\Cal T)$ to $\Ga(\Cal T^3)$, and if we project to
$\Cal T^3/\Cal T^4$ and then further to the component $\ce_a[-4]$
(which corresponds to the eigenvalue $\be_3^2$), then the composition
vanishes on $\Ga(\Cal T^1)$. Hence it induces an operator from
sections of $\Cal T/\Cal T^1\cong\ce[-2]$ to sections of $\ce_a[-4]$.
(It is known from the classification of conformally invariant
operators, that this has to vanish in the conformally flat case.) Now
a direct computation shows that this operator actually is always
identically zero. This shows that
$$
(\Cal C-\be_3^1)\o(\Cal C-\be_3^2)\o(\Cal C-\be_2^1)\o(\Cal
C-\be_2^2)\o(\Cal C-\be_1)
$$
maps all of $\Ga(\Cal T)$ to $\Ga(\Cal T^4)$. Hence if we further
apply $(\Cal C-\be_5)\o (\Cal C-\be_4^1)\o(\Cal C-\be_4^2)$, the
result maps all of $\Ga(\Cal T)$ to $\Ga(\Cal T^6)$.

Similarly, we can consider the composition 
$$
(\Cal C-\be_5)\o (\Cal C-\be_4^1)\o(\Cal C-\be_4^2)\o(\Cal C-\be_3^2) 
$$
on the space of those sections of $\Cal T^3$ whose image in $\Cal
T^3/\Cal T^4$ is a section of the component $\ce_a[-4]$ only.  As
before, this clearly maps all such sections to sections of $\Cal T^6$,
and since $\be_3^2=\be_6$ it vanishes on sections of the subbundle
$\Cal T^4$. Hence we get an induced operator from sections of
$\ce_a[-4]$ to sections of $\Cal T^6=\ce[-8]$. Once again, a direct
computation shows that this operator vanishes identically (which in
the conformally flat case follows from the known classification
results). Now on the other hand, the composition
$$
(\Cal C-\be_3^1)\o(\Cal C-\be_2^1)\o(\Cal C-\be_2^2)\o (\Cal C-\be_1)
$$
maps $\Ga(\Cal T^1)$ to $\Ga(\Cal T^3)$ and projecting to $\Cal
T^3/\Cal T^4$ the result lies in $\Ga(\ce_a[-4])$ only. Together with
the above observation we conclude that if in the composition
\eqref{cubecomp} we leave out one of the two factors $(\Cal C-\be_0)$,
then the result still maps sections of $\Cal T$ to sections of $\Cal
T^6$ and vanishes on sections of $\Cal T^1$. Hence we again get an
induced operator mapping sections of $\Cal T/\Cal T^1\cong\ce[-2]$ to
sections of $\ce[-8]\cong\Cal T^6$. Of course, this also implies that
the original composition \eqref{cubecomp} induces the zero operator in
dimension $10$.

A similar computation as for general dimensions now shows that the
principal part of this operator is a nonzero multiple of
$\si\mapsto\Delta^3\si$. Hence we have obtained a cube of the
Laplacian in dimension $10$, although we cannot conclude that this is
strongly invariant.

Next, let us discuss dimension $n=4$, for which there is no
conformally invariant cube of the Laplacian by
\cite{Graham:nonex}. Due to the coincidences of Casimir eigenvalues,
the composition \eqref{cubecomp} here specialises to 
\begin{equation}
  \label{cubecomp4}
(\Cal C-\be_0)^3\o(\Cal C-\be_1)^2\o(\Cal C-\be_2^2)^2\o(\Cal
C-\be_3^1)\o(\Cal C-\be_3^2).   
\end{equation}
One might hope that one can define a cube of the Laplacian in
dimension four, at last for a certain class of conformal manifolds by
leaving out one of the three factors $(\Cal C-\be_0)$. This turns out
to work however, only on the subcategory of locally conformally flat
structures. 

The pattern is similar to that arising for the square of the Laplacian
in dimension four. The composition $(\Cal C-\be_0)\o(\Cal
C-\be_3^1)\o(\Cal C-\be_3^2)$ is easily seen to induce a second order
operator $\Ph$ mapping sections of $\Cal E_{(ab)_0}[1]\subset\Cal
T^2/\Cal T^3$ to sections of $\Cal E_{(ab)_0}[-1]\subset\Cal T^4/\Cal
T^5$. Likewise, the composition $(\Cal C-\be_0)\o(\Cal C-\be_1)$
induces an operator $\Ps_1$ mapping sections of $\ce[1]\cong\Cal
T/\Cal T^1$ to sections of $\Cal E_{(ab)_0}[1]\subset\Cal T^2/\Cal
T^3$ as well as an operator $\Ps_2$, which maps sections of $\Cal
E_{(ab)_0}[-1]\subset\Cal T^4/\Cal T^5$ to sections of
$\ce[-5]\cong\Cal T^6$. To get and induced operator
$\Ga(\ce[1])\to\Ga(\ce[-5])$ after leaving out one of the three
factors $(\Cal C-\be_0)$ in \eqref{cubecomp4}, one needs the
compositions $\Ph\o\Ps_1$ and $\Ps_2\o\Ph$ to vanish identically.
However, it turns out that both these compositions actually are second
order operators with Weyl curvature in the principal symbol and a
tensorial part involving the Bach tensor. Further, from the explicit
form for the principal symbol one may see that 
it 
 vanishes only in
the locally flat case (where this also follows from the classification
results). In the latter case, one can then compute the principal part
similarly as before to see that one indeed does obtain a conformally
invariant cube of the Laplacian on locally conformally flat
$4$--manifolds, but not for a larger class.

Finally, in the critical dimension $n=6$ some details remain
unresolved. Due to the coincidences of Casimir eigenvalues, the
composition \eqref{cubecomp} specialises to
\begin{equation}
  \label{cubecomp6}
(\Cal C-\be_0)^3\o(\Cal C-\be_2^1)^2\o(\Cal
C-\be_2^2)^2\o(\Cal C-\be_3^1)\o(\Cal C-\be_3^2).   
\end{equation}
As for the square of the Laplacian in dimension four, the hope would
be to leave out one of the three factors $(\Cal C-\be_0)$ and still
get an induced operator. Also, the verifications to be made are
analogous to ones from \ref{3.2}. The composition
$$
(\Cal C-\be_0)\o(\Cal C-\be_2^1)^2\o(\Cal
C-\be_2^2)^2\o(\Cal C-\be_3^1)\o(\Cal C-\be_3^2)
$$
induces a fourth order operator $T:\Ga(\ce_a)\to\Ga(\ce_a[-4])$. On
the other hand, $(\Cal C-\be_0)$ induces the exterior derivative
$d:\Ga(\ce)\to\Ga(\ce_a)$ as well as the divergence
$\delta:\Ga(\ce_a[-4])\to\Ga(\ce[-6])$. Leaving out one of the three
factors $(\Cal C-\be_0)$ in \eqref{cubecomp6}, the result induces an
operator $\Ga(\ce)\to \Ga(\ce[-6])$ if and only if the compositions
$T\o d$ and $\delta\o T$ vanish identically. Of course, this is true
in the flat case, so there the construction again works. While we have
been able to compute a complete formula for $T$ in the curved case,
computing the two compositions explicitly seems to be a serious 
task. To sort out this problem new ideas would be helpful.

\end{document}